\documentclass[a4paper,oneside,12pt]{amsart}

\pdfoutput = 1 % sets the output file to be in PDF format (instead of DVI)

\usepackage[
a4paper ,
top    = 3cm ,
bottom = 2.5cm ,
left   = 2.8cm ,
right  = 2cm ,
]{geometry}

\usepackage[T1]{fontenc} % allows to type accented characters directly into the .tex file

\usepackage[american]{babel}
\usepackage{mathtools} % amsmath improvement
\mathtoolsset{mathic} % math italics correction

\usepackage[lf]{OldStandard}
\usepackage[scale=0.85]{FiraSans} % sans serif text font
\usepackage[scale=1]{inconsolata} % sans serif monospaced ("typewriter") font

\usepackage{eulerpx} % math font
\usepackage{eucal}

\usepackage{bm} % improved bold math
\usepackage{xcolor}

\makeatletter
\g@addto@macro\bfseries{\boldmath}
\makeatother

\allowdisplaybreaks[3] % allow page breaks in multiline equations

\makeatletter
\renewcommand\operator@font{\sf}
\makeatother

\DeclareTextFontCommand{\noun}{\scshape} % small caps for proper nouns

\newcommand{\C}{ \varmathbb{C} }
\newcommand{\D}{ \varmathbb{D} }
\newcommand{\E}{ \varmathbb{E} }
\newcommand{\T}{ \varmathbb{T} }
\newcommand{\Z}{ \varmathbb{Z} }

\newcommand{\CC}{ \mathcal{C} }
\newcommand{\CP}{ \mathcal{P} }

\newcommand{\BH}{q}
\newcommand{\Znonneg}{ \varmathbb{Z}_{ \geq 0 } }
\newcommand{\MW}{M}
\newcommand{\EtP}{ E_{ t , P } }
\newcommand{\AsigbeP}{ A_{ \sigma , \beta , P } }
\newcommand{\QsigbeP}{ Q_{ \sigma , \beta , P } }
\newcommand{\Ssigbe}{ S_{ \sigma , \beta } }

\newcommand\mystrut{\rule{0pt}{15pt}}

\DeclareMathOperator{\supp}{supp}

\DeclarePairedDelimiter{\abs}{\lvert}{\rvert}
\DeclarePairedDelimiter{\norm}{\lVert}{\rVert}

\usepackage[normalem]{ulem}
\newcommand{\soutc}{\bgroup\markoverwith{\textcolor{cyan}{\rule[0.4ex]{1pt}{1.7pt}}}\ULon}

\NewCommandCopy\oldsqrt\sqrt
\RenewDocumentCommand{\sqrt}{om}{%
  \IfValueTF{#1}{\mathpalette{\DHLhksqrt}{{\oldsqrt[{#1}]{#2\mskip2mu}}{\oldsqrt{#2\mskip2mu}}}}%
                {\mathpalette{\DHLhksqrt}{{\oldsqrt{#2\mskip1.5mu}}{\oldsqrt{#2\mskip1.5mu}}}}%
}%
\makeatletter
\newcommand*\bold@name{bold}
\newcommand\DHLhksqrt[2]{%
  \begingroup
  \setbox0 =\hbox{$#1\@secondoftwo#2$}%
  \@firstoftwo#2%
  \vrule height \dimexpr \ht0 - %
          \fontdimen8 \ifx#1\scriptscriptstyle\scriptscriptfont\else
                        \ifx#1\scriptstyle\scriptfont\else\textfont\fi
                      \fi3 \relax
         depth -\dimexpr 0.8\ht0 - %
          \fontdimen8 \ifx#1\scriptscriptstyle\scriptscriptfont\else
                        \ifx#1\scriptstyle\scriptfont\else\textfont\fi
                      \fi3 \relax
         width \dimexpr \fontdimen8 \ifx#1\scriptscriptstyle\scriptscriptfont\else
                        \ifx#1\scriptstyle\scriptfont\else\textfont\fi
                      \fi3 \relax
  \endgroup
}%
\makeatother % root symbol with hook

\numberwithin{equation}{section} % numbering of equations includes section number

\usepackage[pagebackref]{hyperref}
\hypersetup{
allcolors     = blue ,
breaklinks    = true ,
colorlinks    = true ,
pdfencoding   = unicode ,
pdfpagelayout = OneColumn ,
pdfstartview  = {FitH} ,
}

\usepackage{bookmark} % PDF bookmarks

\usepackage[
alphabetic ,
msc-links ,
nobysame ,
]{amsrefs}

\makeatletter
\def\print@backrefs#1{\space\SentenceSpace[cited on page(s) \csname br@#1\endcsname]}
\makeatother

\BibSpec{book}{
     + {}  {\PrintPrimary}                {transition}
     + {,} { \textit}                     {title}
     + {.} { }                            {part}
     + {:} { \textit}                     {subtitle}
     + {,} { \PrintEdition}               {edition}
     + {}  { \PrintEditorsB}              {editor}
     + {,} { \PrintTranslatorsC}          {translator}
     + {,} { \PrintContributions}         {contribution}
     + {,} { }                            {series}
     + {,} { \voltext}                    {volume}
     + {,} { }                            {publisher}
     + {,} { }                            {organization}
     + {,} { }                            {address}
     + {,} { \PrintDateB}                 {date}
     + {,} { }                            {status}
     + {,} { \PrintDOI}                   {doi}
     + {}  { \parenthesize}               {language}
     + {}  { \PrintTranslation}           {translation}
     + {;} { \PrintReprint}               {reprint}
     + {.} { }                            {note}
     + {.} {}                             {transition}
     + {}  {\SentenceSpace \PrintReviews} {review}
}

\usepackage[doipre={}]{uri} % DOI hyperlinks for bibliography items

\usepackage{enumitem} % customized lists

\usepackage[framemethod = tikz]{mdframed}
\mdfdefinestyle{barra}{
backgroundcolor   = white ,
linewidth         = 3pt ,
topline           = false ,
bottomline        = false ,
rightline         = false ,
innertopmargin    = -6pt ,
innerbottommargin = 0pt ,
innerrightmargin  = 0pt ,
innerleftmargin   = 5pt ,
}

\theoremstyle{plain}
\newmdtheoremenv[ default , style = barra ]{theorem}[dummy]{Theorem}
\newmdtheoremenv[ default , style = barra ]{corollary}[dummy]{Corollary}
\newmdtheoremenv[ default , style = barra ]{proposition}[dummy]{Proposition}
\newmdtheoremenv[ default , style = barra ]{lemma}[dummy]{Lemma}

\theoremstyle{definition}
\newmdtheoremenv[ default , style = barra ]{definition}[dummy]{Definition}
\newmdtheoremenv[ default , style = barra ]{example}[dummy]{Example}
\newmdtheoremenv[ default , style = barra ]{remark}[dummy]{Remark}
\newmdtheoremenv[ default , style = barra ]{problem}{Problem}

\begin{document}

\title[Asymptotic contractivity of restricted BH constants]{Asymptotic Contractivity of Bohnenblust--Hille Constants with Bounded Monomial Support}

\author[N.~Caro-Montoya]{Nicolás Caro-Montoya}
\address{Departamento de Matemática \\ Centro de Ciências Exatas e da Natureza \\ Universidade Federal de Pernambuco \\ Avenida Jornalista Aníbal Fernandes, S/N - Cidade Universitária \\ Recife/PE - Brasil - 50740-560}
\email[N.~Caro-Montoya]{jorge.caro@ufpe.br}

\author[D.~Núñez-Alarcón]{Daniel Núñez-Alarcón}
\address{Departamento de Matemáticas \\ Universidad Nacional de Colombia \\ Bogotá, Colombia - 111321}
\email[D.~Núñez-Alarcón]{dnuneza@unal.edu.co}

\author[D.~Serrano-Rodríguez]{Diana Serrano-Rodríguez}
\email[D.~Serrano-Rodríguez]{diserranor@unal.edu.co}

\subjclass[2020]{Primary 46G25; Secondary 32A05}
\keywords{\noun{Bohnenblust--Hille} inequality, homogeneous polynomials, optimal constants, sparse interactions, asymptotic contractivity}

\begin{abstract}

We study the optimal \noun{Bohnenblust--Hille} constants \( K_{ m , \MW } \) for complex \( m \)-homo\-geneous polynomials in any number of variables whose monomials with nonzero coefficient involve at most \( \MW \) variables. For every fixed \( \MW \), we prove that these constants satisfy
\[
K_{ m , \MW } \leq A_{ \MW }^{ \MW / m } \cdot m^{ ( \MW^2 - 1 ) / ( 2m ) } \, , \quad \textnormal{for} \ m \geq \MW \, ,
\]
where \( A_{ \MW } \geq 1 \) depends only on \( \MW \). In particular,
\[
K_{ m , \MW } \to 1 \quad \textnormal{as} \ m \to \infty \, ,
\]
and so the corresponding \noun{Bohnenblust--Hille} constants are asymptotically contractive. The proof exploits the homogeneous structure through a decomposition according to exact monomial-support levels, partitions of the set of variables, multilinear \noun{Bohnenblust--Hille} estimates, and interpolation with \noun{Parseval}'s identity.

\end{abstract}

\maketitle

\subsection*{Notation and conventions}

\noindent Throughout this paper \( \Z^+ \) (resp.~\( \Znonneg \)) denotes the set of positive (resp.~nonnegative) integers, and \( \D \) denotes the open unit disc in the field \( \C \) of complex numbers. Given \( n \in \Z^+ \) and \( \alpha = ( \alpha_1 , \dotsc , \alpha_n ) \in \Znonneg^n \)\,, we define \( \abs{ \alpha } \coloneq \alpha_1 + \dotsb + \alpha_n \) and \( \supp ( \alpha ) \coloneq \{ j \colon \alpha_j > 0 \} \), and we denote the cardinality of \( \supp ( \alpha ) \) by \( w( \alpha ) \). For \( z = ( z_1 , \dotsc , z_n ) \in \C^n \), the quantity \( z^{ \alpha } \coloneq z_1^{ \alpha_1 } \dotsm z_n^{ \alpha_n } \) is called a \emph{monomial} (in the variables \( z_1 , \dotsc , z_n \)), and the value \( \abs{ \alpha } \) is the \emph{degree} of \( z^{ \alpha } \). Notice that \( j \in \supp ( \alpha ) \) precisely when the variable \( z_j \) divides the monomial \( z^{ \alpha } \), and \( w( \alpha ) \) is precisely the number of variables that divide \( z^{ \alpha } \).

Let \( n \in \Z^+ \). A \emph{complex polynomial function in \( n \) variables} (\emph{polynomial} for short) is a function \( P \colon \C^n \to \C \) of the form
\[
P(z) = \sum_{ \alpha } c_{ \alpha } (P) \, z^{ \alpha } \, ,
\]
with \( c_{ \alpha } (P) \in \C \) and \( c_{ \alpha } (P) \neq 0 \) for finitely many \( \alpha \). Given \( m \in \Z^+ \), the polynomial \( P \) is said to be \emph{\( m \)-homogeneous} if \( c_{ \alpha } (P) = 0 \) whenever \( \abs{ \alpha } \neq m \). We denote the vector space of \( m \)-homogeneous polynomials in \( n \) variables by \( \CP_m^{ (n) } \), and we endow it with the supremum norm \( \norm{ \ }_{ \infty } \) on the \( n \)-dimensional unit polydisc:
\[
\norm{P}_{ \infty } = \sup_{ z \in \D^n } \abs{ P(z) } \, .
\]

\section{Introduction}

\noindent The polynomial \noun{Bohnenblust--Hille} inequality asserts that, for each \( m \in \Z^+ \), there is a constant \( C_m \geq 1 \) such that for all \( n \in \Z^+ \) and every \( P \in \CP_m^{ (n) } \):
\[
\Biggl ( \, \sum_{ \alpha } \, \abs{ c_{ \alpha } (P) }^{ \frac{ 2m }{ m + 1 } } \Biggr )^{ \frac{ m + 1 }{ 2m } } \leq C_m \cdot \norm{P}_{ \infty } \, .
\]
Although it is well-known that the value \( \frac{ 2m }{ m + 1 } \) is optimal, the determination of the optimal constants and their rate of growth remains as a central question.

A natural support-restricted problem was addressed by \noun{D.~Carando}, \noun{A.~Defant} and \noun{P.~Sevilla-Peris}~\cite{CDS}. Their estimates apply, in particular, to polynomials whose monomials with nonzero coefficient involve at most \( \MW \) variables (equivalently, \( w( \alpha ) \leq \MW \)), and give constants of polynomial growth in \( m \). More precisely, their argument yields a bound of order \( m^{ ( \MW + 1 ) / 2 } \) for fixed \( \MW \).

\noun{M.~Maia}, \noun{T.~Nogueira} and \noun{D.~Pellegrino}~\cite{MNP} later obtained bounds that are uniform in \( m \) for each fixed \( \MW \). Their framework is slightly stronger: the coefficient-related sum is restricted to those \( \alpha \) with \( w( \alpha ) \leq \MW \), but the polynomial itself need not be supported on that set (that is, the coefficients \( c_{ \alpha } (P) \) with \( w ( \alpha ) > \MW \) are allowed to be nonzero). It implies, in particular, uniform boundedness for the restricted class considered in this work.

The purpose of this paper is to determine the asymptotic behavior of the corresponding optimal constants. We prove that, for every fixed \( \MW \), these constants tend to \( 1 \) as \( m \) tends to infinity. More precisely, we obtain the quantitative upper estimate \( A_{ \MW }^{ \MW / m } \cdot m^{ ( \MW^2 - 1 ) / ( 2m ) } \), where \( A_{ \MW } \geq 1 \) depends only on \( \MW \) (Theorem~\ref{thm:main}). Thus, for the restricted class considered here, the constants are not merely uniformly bounded: they are asymptotically contractive. We emphasize that, although our result concerns a more restrictive class than the one considered in~\cite{MNP}, its novelty lies in the identification of the precise limiting value of the corresponding constants.

The proof is specialized to the homogeneous problem and uses its full combinatorial structure. A monomial of degree \( m \) supported on exactly \( k \) variables determines a positive composition of \( m \), that is, a \( k \)-tuple of positive integers that sum to \( m \); there are \( \binom{ m - 1 }{ k - 1 } \) such compositions. After partitioning the variables into \( k \) labeled blocks, the monomials whose supports meet every block can be represented as the coefficients of a \( k \)-linear form, to which we apply the multilinear \noun{Bohnenblust--Hille} inequality (Proposition~\ref{prop:support-level}); this block-decomposition device is related to the one used in the support-sensitive setting of \noun{A.~Defant}, \noun{D.~Galicer}, \noun{M.~Mansilla}, \noun{M.~Masty{\l}o} and \noun{S.~Muro}~\cite{DGMMM}. A bounded projection onto exact support levels (Lemma~\ref{lem:projection}), together with an interpolation argument, implies our result.

At the end, we compare our argument with the recent support-sensitive \noun{Bohnenblust--Hille} inequality from~\cite{DGMMM}. Their result is considerably more general and already implies the qualitative limit by a standard interpolation argument. In the homogeneous setting considered here, our proof uses the constraint on the sum of the exponents which allows us to obtain a smaller power of \( m \).

From now on, we work exclusively with elements of \( \CP_m^{ (n) } \), and the letters \( k \) and \( \MW \) denote positive integers. In addition, we define
\[
q_r \coloneq \frac{ 2r }{ r + 1 } \, , \quad \textnormal{for} \ r \in \Z^+ \, .
\]
These values, together with their inverses, appear prominently as exponents along our calculations. It should be noted that we work only with complex scalars, which is essential for the asymptotic contractivity conclusion. Indeed, in the real case asymptotic contractivity already fails for \( \MW = 1 \): if \( m \) is even, then the \( m \)-homogeneous polynomial \( P = x_1^m - x_2^m \) has norm one on \( ( -1 , 1 ) \times ( -1 , 1 ) \), whereas the \( \frac{ 2m }{ m + 1 } \)-norm of the vector of coefficients of \( P \) is \( 2^{ ( m + 1 ) / ( 2m ) } \to \sqrt{2} \).

\section{The restricted optimal constants}

\noindent Let
\begin{equation} \label{defLambdaW}
\CP_{ m , \MW }^{ (n) } = \bigl \{ P \in \CP_m^{ (n) } \colon c_{ \alpha } (P) = 0 \quad \textnormal{whenever} \ w ( \alpha ) > \MW \bigr \} \, .
\end{equation}
Thus, \( \CP_{ m , \MW }^{ (n) } \) consists of those polynomials \( P \in \CP_m^{ (n) } \) whose monomials with nonzero coefficient are divisible by at most \( \MW \) variables. If \( \alpha \in \Znonneg^n \) satisfies \( \abs{ \alpha } = m \), then \( w ( \alpha ) \leq \min \{ m , n \} \). Consequently, \( \MW \geq \min \{ m , n \} \) implies \( \CP_{ m , \MW }^{ (n) } = \CP_m^{ (n) } \). Conversely, suppose that \( \CP_{ m , \MW }^{ (n) } = \CP_m^{ (n) } \). If \( m \leq n \), then \( z^{ \alpha } \coloneq z_1 \dotsm z_m \in \CP_{ m , \MW }^{ (n) } \)\,, hence \( \MW \geq w( \alpha ) = m \); similarly, if \( m > n \), then \( z^{ \alpha } \coloneq z^{ m - n + 1 } z_2 \dotsm z_n \in \CP_{ m , \MW }^{ (n) } \)\,, hence \( \MW \geq w( \alpha ) = n \). In any case, we conclude that \( \MW \geq \min \{ m , n \} \).

We define \( K_{ m , \MW } \) as the least constant such that
\begin{equation} \label{def K_m,MW}
\Biggl ( \, \sum_{ \alpha } \, \abs{ c_{ \alpha } (P) }^{ \frac{ 2m }{ m + 1 } } \Biggr )^{ \frac{ m + 1 }{ 2m } } \leq K_{ m , \MW } \cdot \norm{P}_{ \infty } \, , \quad \textnormal{for all} \ n \ \textnormal{and every} \ P \in \CP_{ m , \MW }^{ (n) } \, .
\end{equation}
A polynomial estimate for these constants was first obtained in~\cite{CDS}, namely
\[
K_{ m , \MW } \leq 2^{ \MW / 2 } \, m^{ ( \MW + 1 ) / 2 } \, .
\]
Such estimate was substantially improved in~\cite{MNP}, where the authors consider a broader formulation, in which the polynomial itself may contain monomials with arbitrary support, whereas the coefficient sum is restricted to monomials involving at most \( \MW \) variables. In particular, their estimate applies to the restricted class considered in~\eqref{defLambdaW}. If \( \widetilde{K}_{ m , \MW } \) denotes the optimal constant in that formulation, then from the proof of~\cite{MNP}*{Theorem~1.2} it follows that
\[
\limsup_{ m \to \infty } \, \widetilde{K}_{ m , \MW } \leq \MW^{ \MW } e^{ \MW }\, .
\]
In particular we have \( \widetilde{K}_{ m , \MW } \leq \kappa_{ \MW } \), where \( \kappa_{ \MW } \) depends only on \( \MW \).

The distinction between the method from~\cite{MNP} and ours is essential: our approach uses crucially the absence of support levels larger than \( \MW \), together with the homogeneity constraint. This allows to group the monomials with nonzero coefficient according to their support size and to exploit the combinatorics of the corresponding positive compositions.

Our main result is the following.

\begin{theorem} \label{thm:main}

For each \( \MW \) there exists a constant \( A_{ \MW } \geq 1 \) such that
\begin{equation} \label{main inequality}
1 \leq K_{ m , \MW } \leq A_{ \MW }^{ \MW / m } \cdot m^{ ( \MW^2 - 1 ) / ( 2m ) } \, , \quad \textnormal{for every} \ m \geq \MW \, .
\end{equation}
Consequently,
\begin{equation} \label{main limit}
\lim_{ m \to \infty } K_{ m , \MW } = 1 \quad \textnormal{for every} \ \MW \, .
\end{equation}

\end{theorem}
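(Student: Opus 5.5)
The proof has three steps: a support-level estimate, a bounded projection onto exact support levels, and an interpolation between $\ell_2$ and $\ell_{q_k}$.

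\textbf{Lower bound.} The inequality $K_{m,\MW}\geq 1$ is trivial: test with $P=z_1^m$, which belongs to $\CP_{m,\MW}^{(n)}$ since $\MW\geq 1$.

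\textbf{Step 1: exact support level $k$.} Fix $k\leq \MW$ and let $P_k$ denote the part of $P$ supported on $\{\alpha\colon w(\alpha)=k\}$. I would first show
\[
\Bigl(\sum_{w(\alpha)=k}\abs{c_\alpha(P)}^{q_k}\Bigr)^{1/q_k}\leq C_k\,\binom{m-1}{k-1}^{1/q_k}\,\norm{P_k}_\infty .
\]
To do this, partition the variables randomly into $k$ labeled blocks. For a fixed partition, the monomials of $P_k$ whose support meets every block (necessarily exactly one variable per block) are grouped according to the composition of $m$ they induce. For each composition, evaluating on products $z=(\lambda_1 u_1,\dots,\lambda_k u_k)$ of blocks and extracting the coefficient of $\lambda_1^{a_1}\cdots\lambda_k^{a_k}$ by Cauchy's formula in the $\lambda$'s yields a $k$-linear-like form in the block variables; more precisely, substitute $u_j^{a_j}$-type coordinates. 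Its norm is at most $\norm{P_k}_\infty$, and the multilinear Bohnenblust--Hille inequality (with constant $C_k$ depending only on $k$) applies to its coefficients. Averaging over partitions, each $\alpha$ is captured with probability $k!/k^k$, which gives the estimate with a factor $(k^k/k!)^{1/q_k}$ absorbed into $C_k$. Summing over the $\binom{m-1}{k-1}$ compositions gives the stated combinatorial factor. This is the content of Proposition~\ref{prop:support-level}.

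\textbf{Step 2: projection and interpolation.} By Lemma~\ref{lem:projection}, $\norm{P_k}_\infty\leq B_{\MW}\norm{P}_\infty$. I expect this can be proved by averaging over rotations $z_j\mapsto e^{i\theta_j}z_j$ combined with dilations, extracting support levels through a finite inclusion–exclusion over subsets of variables whose cardinality is controlled by $\MW$. Parseval gives $\norm{(c_\alpha(P_k))}_2\leq \norm{P_k}_\infty$. Since $q_k\leq q_m\leq 2$ for $k\leq m$, Hölder/interpolation of $\ell_{q_m}$ between $\ell_{q_k}$ and $\ell_2$ with $1/q_m=\theta/q_k+(1-\theta)/2$ gives $\theta=\frac{k}{m}$, because $1/q_m-1/2=\frac1{2m}$ and $1/q_k-1/2=\frac1{2k}$. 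Hence
\[
\norm{c(P_k)}_{q_m}\leq \bigl(C_k B_{\MW}\binom{m-1}{k-1}^{1/q_k}\bigr)^{k/m}\norm{P}_\infty .
\]
Using $\binom{m-1}{k-1}\leq m^{k-1}$ and $\frac{k}{q_k}=\frac{k+1}{2}$, the $m$-power is $m^{(k-1)(k+1)/(2m)}=m^{(k^2-1)/(2m)}\leq m^{(\MW^2-1)/(2m)}$.

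\textbf{Step 3: summing over levels.} Sum over $k\leq \MW$ using the $q_m$-triangle inequality, $\norm{c(P)}_{q_m}^{q_m}=\sum_k\norm{c(P_k)}_{q_m}^{q_m}$, which gives at most a factor $\MW^{1/q_m}\leq \MW^{(m+1)/(2m)}$. This is not small. The main obstacle is therefore this summation step, since the naive triangle inequality destroys contractivity. I would instead interpolate globally: apply Parseval to the whole $P$, $\norm{c(P)}_2\leq\norm{P}_\infty$, and the $\ell_{q_\MW}$ bound $\norm{c(P)}_{q_\MW}\leq\sum_k\norm{c(P_k)}_{q_\MW}\leq \MW B_\MW \max_k C_k\, m^{(\MW-1)/q_\MW}\norm{P}_\infty$, which uses $\ell_{q_k}\subset\ell_{q_\MW}$ contractively. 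Interpolating then with $\theta=\MW/m$ gives $(A_\MW m^{(\MW-1)(\MW+1)/(2\MW)})^{\MW/m}=A_\MW^{\MW/m}m^{(\MW^2-1)/(2m)}$, which is exactly the claimed bound. Letting $m\to\infty$ gives the limit~\eqref{main limit}.
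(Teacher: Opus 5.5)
Your proposal is correct and, once you abandon the per-level interpolation for the global one in Step~3, it follows the paper's proof: the block-partition/Fourier-projection estimate at exact support level $k$, with the $\binom{m-1}{k-1}$ composition count and the $k^k/k!$ averaging factor (Proposition~\ref{prop:support-level}), then the bounded projection onto support levels, the resulting $\ell_{q_M}$ bound with $m^{(M^2-1)/(2M)}$, and interpolation against Parseval with $\theta=M/m$ (the paper sums $q_M$-th powers where you use the triangle inequality, which only changes the constant). One caution on your guessed proof of Lemma~\ref{lem:projection}: an inclusion--exclusion over subsets of variables is not dimension-free as such; the paper instead \emph{averages} $P(\varepsilon_1z_1,\dots,\varepsilon_nz_n)$ over independent $\{0,1\}$-valued Bernoulli variables $\varepsilon_j$ of mean $t$, which gives $\sum_j t^jP_j$ with norm at most $\norm{P}_\infty$, and then recovers $P_k$ by Lagrange interpolation at $M+1$ values of $t$.
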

Thus, while~\cite{MNP} provides a uniform estimate in a more general setting, Theorem~\ref{thm:main} gives an explicit asymptotically contractive bound for the original homogeneous support-restricted problem.

The lower estimate in~\eqref{main inequality} follows by taking
\( P=z_1^m \) in~\eqref{def K_m,MW}, and~\eqref{main limit} immediately follows from the upper estimate in~\eqref{main inequality}. The remainder of the paper is devoted to the proof of that upper estimate.

\section{Bounded projections onto exact support levels}

\noindent Given \( P \in \CP_{ m , \MW }^{ (n) } \)\,, we write
\begin{equation} \label{P = P1 + + PM}
P = P_1 + \dotsb + P_{ \MW } \, , \quad \textnormal{where} \ P_j \coloneq \sum_{ w( \alpha ) = j } c_{ \alpha } (P) z^{ \alpha } \in \CP_{ m , \MW }^{ (n) } \, .
\end{equation}
We shall need a dimension-free estimate for the supremum norm of each exact support-level component \( P_k \) of \( P \). The following result provides such an estimate.

\begin{lemma} \label{lem:projection}

For each \( \MW \) and \( k \) with \( k \leq \MW \), there exists a constant \( L_{ \MW , k } \geq 1 \), depending only on \( \MW \) and \( k \), such that
\begin{equation} \label{||P_k|| < = L_{ MW , k } ||P||}
\norm{ P_k }_\infty \leq L_{ \MW , k } \cdot \norm{P}_{ \infty } \, , \quad \textnormal{for all} \ n , m \ \textnormal{and every} \ P \in \CP_{ m , \MW }^{ (n) } \, .
\end{equation}

\end{lemma}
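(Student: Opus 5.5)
Our plan is to obtain $P_k$ from $P$ by an averaging (convolution) procedure that is contractive in the supremum norm, combined with a finite linear-algebra inversion whose size depends only on $\MW$. The basic tool is the family of operators $T_S$, for $S \subseteq \{1,\dotsc,n\}$, defined by
\[
T_S P(z) \coloneq \sum_{\supp(\alpha) \subseteq S} c_\alpha(P)\, z^\alpha .
\]
Each $T_S$ is a contraction on $(\CP_m^{(n)}, \norm{\ }_\infty)$, since $T_S P(z) = P(z_S)$, where $z_S$ agrees with $z$ on $S$ and vanishes off $S$, and $z_S \in \D^n$. Sums of $T_S$ over all $S$ of a given size are not uniformly bounded in $n$, however, so we will instead use a \emph{scaling} device. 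For $t \in [0,1]$ and a random set $S$ that contains each index independently with probability $t$, put
\[
E_t P \coloneq \E_S\, T_S P = \sum_\alpha t^{w(\alpha)} c_\alpha(P)\, z^\alpha = \sum_{j=1}^{\MW} t^j P_j .
\]
As an average of contractions, $E_t$ satisfies $\norm{E_t P}_\infty \le \norm{P}_\infty$ for all $t \in [0,1]$, uniformly in $n$ and $m$.

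The second step is to invert this polynomial dependence in $t$. We fix $\MW$ distinct nodes $t_1, \dotsc, t_{\MW} \in (0,1]$, for instance $t_i = i/\MW$. The $\MW \times \MW$ Vandermonde-type matrix $V = (t_i^j)_{i,j}$ is invertible because the nodes are distinct and nonzero (factor out $t_i$ from each row). Writing $V^{-1} = (b_{k i})$, we obtain
\[
P_k = \sum_{i=1}^{\MW} b_{k i}\, E_{t_i} P ,
\]
and therefore
\[
\norm{P_k}_\infty \le \sum_i \abs{b_{k i}} \, \norm{P}_\infty .
\]
This lets us take $L_{\MW,k} \coloneq \max\bigl\{1, \sum_i \abs{b_{k i}}\bigr\}$, which depends only on $\MW$ and $k$.

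The step that needs the most care is the first one: verifying that $E_t$ is a genuine average of the contractions $T_S$. We must check that $\E_S[\mathbf{1}_{\supp(\alpha) \subseteq S}] = t^{w(\alpha)}$, which follows from the independence of the indices. We must also check that the assumption $P \in \CP_{m,\MW}^{(n)}$ ensures only the exponents $j \le \MW$ occur, so that $\MW$ nodes suffice. Homogeneity in $m$ plays no role here, so the bound is uniform in $m$ and $n$ as required.
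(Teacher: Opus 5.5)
Your proof is correct and follows the paper's argument. The paper's operator $E_tP(z)=\E\bigl[P(z_1\varepsilon_1,\dotsc,z_n\varepsilon_n)\bigr]$ with independent Bernoulli($t$) variables is exactly your average of the restrictions $T_S$, and it likewise recovers $P_k$ by inverting $t\mapsto\sum_{j=1}^{\MW}t^jP_j$. The only differences are cosmetic: the paper uses Lagrange interpolation at $\MW+1$ nodes, whereas you use $\MW$ nonzero nodes and exploit the vanishing constant term, and the paper obtains $L_{\MW,k}\ge 1$ by testing on $z_1\dotsm z_k$ rather than by taking a maximum with $1$.
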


\begin{proof}

Let \( P \in \CP_{ m , \MW }^{ (n) } \)\,. Given \( t \in [ 0 , 1 ] \), let \( \varepsilon = (\varepsilon_1 , \dotsc , \varepsilon_n) \) be an \( n \)-tuple of independent \noun{Bernoulli} random variables satisfying \( \varmathbb{P} ( \varepsilon_r = 1 ) = t \) and \( \varmathbb{P} ( \varepsilon_r = 0 ) = 1 - t \) for each \( r \), and define \( \EtP \colon \C^n \to \C \) by \( \EtP (z) = \E \bigl [ P( z_1 \varepsilon_1 , \dotsc , z_n \varepsilon_n ) \bigr ] \). By~\eqref{P = P1 + + PM} we have
\begin{align*}
\EtP (z) & = \E \Biggl [ \sum_{ j = 1 }^{ \MW } \sum_{ w( \alpha ) = j } c_{ \alpha } (P) \, z^{ \alpha } \cdot \varepsilon^{ \alpha } \Biggr ] = \sum_{ j = 1 }^{ \MW } \sum_{ w( \alpha ) = j } c_{ \alpha } (P) \, z^{ \alpha } \E [ \varepsilon^{ \alpha } ] \, .
\intertext{Given \( \alpha \in \Znonneg^n \)\,, we have \( \varepsilon_r^{ \alpha_r } = \varepsilon_r \) if \( r \in \supp ( \alpha ) \), and \( 1 \) otherwise. This, together with the independence of the \( \varepsilon_r \)\,, yields \( \E [ \varepsilon^{ \alpha } ] = \prod_{ r \in \supp ( \alpha ) } \E [ \varepsilon_r ] = t^{ w( \alpha ) } \), hence}
\EtP (z) & = \sum_{ j = 1 }^{ \MW } \sum_{ w( \alpha ) = j } c_{ \alpha } (P) \, z^{ \alpha } t^j = \sum_{ j = 1 }^{ \MW } t^j P_j (z) \, .
\end{align*}
Thus, we have
\begin{equation} \label{\EtP}
\EtP = \sum_{ j = 1 }^{ \MW } t^j P_j \in \CP_{ m , \MW }^{ (n) } \, , \quad \textnormal{for each} \ t \in [ 0 , 1 ] \, .
\end{equation}
Let \( F \) be the rational function field in \( n \) variables over \( \C \), that is, the field of fractions of the ring of polynomials in \( n \) variables over \( \C \); in particular we have \( \C \subseteq F \) and \( P_1 , \dotsc , P_{ \MW } \in F \). Let \( T \) be an indeterminate over \( F \), and consider the polynomial \( h = \sum_{ j = 1 }^{ \MW } P_j T^j \in F[T] \). Choose distinct points \( t_0 , \dotsc , t_{ \MW } \in [ 0 , 1 ] \subseteq F \), and let \( \ell_0 , \dotsc , \ell_{ \MW } \in \C [T] \subseteq F[T] \) be the corresponding \noun{Lagrange} polynomials:
\[
\ell_j = \prod_{ \substack{ 0 \leq r \leq \MW \\ r \neq j } } \frac{ T - t_r }{ t_j - t_r } \, .
\]
For \( \ell \in F[T] \), let \( [ T^k ] \ell \) be the coefficient of \( T^k \) in \( \ell \). The \noun{Lagrange} interpolation formula implies \( h = \sum_{ j = 0 }^{ \MW } h( t_j ) \ell_j \)\,. On the other hand,~\eqref{\EtP} implies \( h( t_j ) = E_{ t_j } P \). Putting these facts together yields
\[
P_k = [ T^k ] h = [ T^k ] \biggl ( \sum_{ j = 0 }^{ \MW } h( t_j ) \ell_j \biggr ) = \sum_{ j = 0 }^{ \MW } [ T^k ] \ell_j \cdot h( t_j ) = \sum_{ j = 0 }^{ \MW } [ T^k ] \ell_j \cdot E_{ t_j } P \, .
\]
Notice that \( [ T^k ] \ell_j \in \C \) for all \( j \) because \( \ell_j \in \C [T] \). Therefore we have
\[
\norm{ P_k }_{ \infty } \leq \sum_{ j = 0 }^{ \MW } \abs[\big]{ [ T^k ] \ell_j } \cdot \norm{ E_{ t_j } P }_{ \infty } \, .
\]
We claim that \( \norm{ E_{ t_j } P }_{ \infty } \leq \norm{P}_{ \infty } \) for each \( j \), and so~\eqref{||P_k|| < = L_{ MW , k } ||P||} holds with
\[
L_{ \MW , k } \coloneq \sum_{ j = 0 }^{ \MW } \abs[\big]{ [ T^k ] \ell_j } \, ,
\]
which is independent of \( m \), \( n \) and \( P \). To prove our claim, take \( t \in [ 0 , 1 ] \): given \( z \in \D^n \), we have \( ( z_1 \varepsilon_1 , \dotsc , z_n \varepsilon_n ) \in \D^n \) almost surely, hence \( \abs{ P( z_1 \varepsilon_1 , \dotsc , z_n \varepsilon_n ) } \leq \norm{P}_{ \infty } \) almost surely. Thus,
\[
\abs{ \EtP (z) } = \abs[\big]{ \E \bigl [ P( z_1 \varepsilon_1 , \dotsc , z_n \varepsilon_n ) \bigr ] } \leq \E \bigl [ \abs{ P( z_1 \varepsilon_1 , \dotsc , z_n \varepsilon_n ) } \bigr ] \leq \E [ \, \norm{P}_{ \infty } ] = \norm{P}_{ \infty } \, ,
\]
which in turn implies
\[
\norm{ \EtP }_{ \infty } = \sup_{ z \in \D^n } \abs{ \EtP (z) } \leq \norm{P}_{ \infty } \, .
\]
Finally, if for some \( m \) and \( n \) there is a nonzero \( P \in \CP_{ m , \MW }^{ (n) } \) such that \( P = P_k \)\,, then from~\eqref{||P_k|| < = L_{ MW , k } ||P||} applied to \( P \) we obtain \( L_{ \MW , k } \geq 1 \). But this is always the case: we may take, for example, \( m = n = k \), and \( P(z) = z_1 \dotsm z_k \in \CP_{ m , \MW }^{ (n) } = \CP_{ k , \MW }^{ (k) } \)\,.
\end{proof}

\section{The homogeneous support-level estimate}

\noindent Let \( B_k \geq 1 \) denote the optimal constant in the complex \( k \)-linear \noun{Bohnenblust--Hille} inequality~\cite{BH}, that is,
\begin{equation} \label{mult BH}
\Biggl ( \, \sum_{ 1 \leq i_1 , \dotsc , i_k \leq n } \abs[\big]{ A \bigl ( e^{ ( i_1 ) } , \dotsc , e^{ ( i_k ) } \bigr ) }^{ \BH_k } \Biggr )^{ \frac{1}{ \BH_k } }
 \leq B_k \cdot \norm{A} \, ,
\end{equation}
for all \( n \) and every \( k \)-linear form \( A \) on \( \C^n \). Here, for \( n \) fixed, \( e^{ (1) } , \dotsc , e^{ (n) } \) stand for the elements in the canonical basis of \( \C^n \), and \( \norm{A} \) is defined as the supremum of the values \( \abs[\big]{ A \bigl ( x^{ (1) } , \dotsc , x^{ (k) } \bigr ) } \), with \( x^{ (j) } \in \D^n \) for each \( j \). We shall use that \( B_k \) depends only on \( k \) and not on the dimensions.

For \( m \geq k \), let \( \CC_{ m , k } \) be the set of positive compositions of \( m \) into \( k \) parts, that is
\[
\CC_{ m , k } = \{ \beta = ( \beta_1 , \dotsc ,\beta_k ) \in ( \Z^+ )^k \colon \beta_1 + \dotsb + \beta_k = m \}.
\]
The proof of the following result uses certain partitions of the coordinate set \( \{ 1 , \dotsc , n \} \), which are closely related to the random block decomposition considered in~\cite{DGMMM}*{Section~2.2}.

\begin{proposition} \label{prop:support-level}

Let \( P \in \CP_m^{ (n) } \) with \( w( \alpha ) = k \) whenever \( c_{ \alpha } (P) \neq 0 \). Then
\begin{equation} \label{eq:support-level}
\Biggl ( \, \sum_{ \alpha } \, \abs{ c_{ \alpha } (P) }^{ \BH_k } \Biggr )^{ \frac{1}{ \BH_k } } \leq \biggl [ \frac{ k^k }{ k! } \, \binom{ m - 1 }{ k - 1 } \biggr ]^{ \frac{1}{ \BH_k } } B_k \cdot \norm{P}_{ \infty } \, .
\end{equation}

\end{proposition}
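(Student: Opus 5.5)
The plan is to reduce to the multilinear inequality~\eqref{mult BH} by fixing two pieces of data: a labeled partition \( \pi = ( S_1 , \dotsc , S_k ) \) of \( \{ 1 , \dotsc , n \} \) into \( k \) blocks (empty blocks allowed), and a composition \( \beta \in \CC_{ m , k } \). Each such pair will control one family of coefficients. We then sum over \( \beta \) and average over random partitions \( \pi \).

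\emph{Step 1: the form attached to \( ( \pi , \beta ) \).} Call \( \alpha \) \emph{adapted} to \( ( \pi , \beta ) \) if \( \supp ( \alpha ) = \{ i_1 , \dotsc , i_k \} \) with \( i_j \in S_j \) and \( \alpha_{ i_j } = \beta_j \) for each \( j \). For such \( \alpha \) we write \( \alpha = \alpha ( i_1 , \dotsc , i_k ; \beta ) \). Define the \( k \)-linear form
\[
A_{ \pi , \beta } \bigl ( x^{ (1) } , \dotsc , x^{ (k) } \bigr ) = \sum_{ i_1 \in S_1 , \dotsc , i_k \in S_k } c_{ \alpha ( i_1 , \dotsc , i_k ; \beta ) } (P) \, x^{ (1) }_{ i_1 } \dotsm x^{ (k) }_{ i_k } \, .
\]
Its coefficients on the canonical basis are exactly the \( c_\alpha (P) \) with \( \alpha \) adapted to \( ( \pi , \beta ) \), and all other coefficients vanish.

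\emph{Step 2: \( \norm{ A_{ \pi , \beta } } \leq \norm{P}_\infty \).} This is the key step. Fix \( x^{ (j) } \in \D^n \). For \( i \in S_j \), choose \( y_i \) with \( y_i^{ \beta_j } = x^{ (j) }_i \); then \( \abs{ y_i } < 1 \). For \( \lambda \in \T^k \), set \( z_i = \lambda_j y_i \) when \( i \in S_j \), so that \( z \in \D^n \). Expanding, \( P(z) \) is a polynomial in \( \lambda \) whose coefficient of \( \lambda^\beta \) is
\[
\sum c_\alpha (P) \, y^\alpha \, , \quad \text{summed over } \alpha \text{ with } \textstyle \sum_{ i \in S_j } \alpha_i = \beta_j \ \text{for all } j \, .
\]
Since every \( \beta_j \geq 1 \), each block contains at least one variable of \( \supp ( \alpha ) \). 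Because \( w( \alpha ) = k \), each block then contains exactly one such variable, so \( \alpha \) is adapted to \( ( \pi , \beta ) \). For adapted \( \alpha \) we have \( y^\alpha = x^{ (1) }_{ i_1 } \dotsm x^{ (k) }_{ i_k } \), so the coefficient of \( \lambda^\beta \) equals \( A_{ \pi , \beta } ( x^{ (1) } , \dotsc , x^{ (k) } ) \). By Cauchy's inequality (averaging over \( \T^k \)) its modulus is at most \( \norm{P}_\infty \). Applying~\eqref{mult BH} then gives
\[
\sum_{ \alpha \ \text{adapted to} \ ( \pi , \beta ) } \abs{ c_\alpha (P) }^{ \BH_k } \leq B_k^{ \BH_k } \norm{P}_\infty^{ \BH_k } \, .
\]

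\emph{Step 3: summing and averaging.} Sum the last inequality over the \( \binom{ m - 1 }{ k - 1 } \) compositions \( \beta \in \CC_{ m , k } \). For fixed \( \pi \), every \( \alpha \) with \( w( \alpha ) = k \) whose support meets every block is adapted to exactly one \( \beta \), namely \( \beta_j = \) the exponent of its variable in \( S_j \). Hence
\[
\sum_{ \alpha \colon \supp ( \alpha ) \ \text{meets each} \ S_j } \abs{ c_\alpha (P) }^{ \BH_k } \leq \binom{ m - 1 }{ k - 1 } B_k^{ \BH_k } \norm{P}_\infty^{ \BH_k } \, .
\]
Now let \( \pi \) be random, assigning each coordinate independently and uniformly to one of the \( k \) blocks. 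For a fixed support of size \( k \), the probability that its elements fall into distinct blocks is \( k! / k^k \). Taking expectations on the left-hand side therefore yields
\[
\frac{ k! }{ k^k } \sum_\alpha \abs{ c_\alpha (P) }^{ \BH_k } \leq \binom{ m - 1 }{ k - 1 } B_k^{ \BH_k } \norm{P}_\infty^{ \BH_k } \, ,
\]
which is~\eqref{eq:support-level} after rearranging and taking \( \BH_k \)-th roots.

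The main obstacle is Step 2: making the coefficient extraction genuinely multilinear. The root substitution \( y_i^{ \beta_j } = x^{ (j) }_i \) is what handles this. The facts that \( \beta_j \geq 1 \) and \( w( \alpha ) = k \) are what exclude unwanted monomials from the \( \lambda^\beta \) coefficient.
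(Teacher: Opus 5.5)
Your proposal is correct and follows essentially the same route as the paper: your labeled partition $\pi$ and composition $\beta$ are the paper's map $\sigma$ and $\beta$, your extraction of the $\lambda^{\beta}$-coefficient after the root substitution is the paper's multihomogeneous Fourier projection $Q_{\sigma,\beta,P}$ together with the identification $\|A_{\sigma,\beta,P}\| = \|Q_{\sigma,\beta,P}\|_\infty \leq \|P\|_\infty$, and averaging over uniformly random partitions (probability $k!/k^k$) is the paper's exact count of $k!\cdot k^{n-k}$ admissible maps out of $k^n$. The only difference is cosmetic: the paper first identifies the projected polynomial and then passes to the $k$-linear form, whereas you merge these into a single step.
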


\begin{proof}

Recall that \( w( \alpha ) \leq \min \{ m , n \} \) whenever \( c_{ \alpha } (P) \neq 0 \). Thus, \( k > \min \{ m , n \} \) implies \( P = 0 \), and the result follows (\( \binom{ m - 1 }{ k - 1 } = 0 \) for \( k > m \), by definition). Therefore we may assume \( k \leq \min \{ m , n \} \). In what follows \( r \) always denotes an integer between \( 1 \) and \( k \).

For a map \( \sigma \colon \{ 1 , \dotsc , n \} \to \{ 1 , \dotsc , k \} \), let \( I_r [ \sigma ] = \sigma^{ -1 } ( \{ r \} ) \). Notice that \( \{ 1 , \dotsc , n \} = I_1 [ \sigma ] \uplus \dotsb \uplus I_k [ \sigma ] \) (disjoint union), but some blocks \( I_r [ \sigma ] \) may be empty.

For \( \beta = ( \beta_1 , \dotsc , \beta_k ) \in \CC_{ m , k } \)\,, we define the \emph{multihomogeneous \noun{Fourier} projection} \( \QsigbeP \colon \C^n \to \C \) by
\begin{equation} \label{defQsigbeP}
\QsigbeP (z) = \textstyle \int_{ \T^k } P( \omega_{ \sigma (1) } \, z_1 , \dotsc , \omega_{ \sigma (n) } \, z_n ) \, \overline{ \omega_1 }^{ \, \beta_1 } \dotsm \, \overline{ \omega_k }^{ \, \beta_k } \, d \omega \, ,
\end{equation}
where \( \T \) is the unit circle in \( \C \) and \( d \omega \) stands for the normalized \noun{Haar} measure on \( \T^k \).

In order to determine \( \QsigbeP \)\,, we first evaluate \( Q_{ \sigma , \beta , \widetilde{P} } \)\,, with \( \widetilde{P} (z) = z^{ \alpha } \) and \( w( \alpha ) = k \). Using that \( \{ 1 , \dotsc , n \} = I_1 [ \sigma ] \uplus \dotsb \uplus I_k [ \sigma ] \) we get
\begin{align*}
\widetilde{P} ( \omega_{ \sigma (1) } \, z_1 , \dotsc , \omega_{ \sigma (n) } \, z_n ) & = z^{ \alpha } \prod_{ j = 1 }^n \omega_{ \sigma (j) }^{ \alpha_j } = z^{ \alpha } \prod_{ r = 1 }^k \prod_{ j \in I_r [ \sigma ] } \omega_{ \sigma (j) }^{ \alpha_j } = z^{ \alpha } \prod_{ r = 1 }^k \prod_{ j \in I_r [ \sigma ] } \omega_r^{ \alpha_j } \\[3mm]
& = z^{ \alpha } \prod_{ r = 1 }^k \bigl ( \omega_r \bigr ) \mystrut^{ \textstyle \sum_{ j \in I_r [ \sigma ] } \, \alpha_j } \, ,
\end{align*}
hence
\[
Q_{ \sigma , \beta , \widetilde{P} } (z) = z^{ \alpha } \prod_{ r = 1 }^k \int_{ \T } \, \bigl ( \omega_r \bigr ) \mystrut^{ \textstyle \sum_{ j \in I_r [ \sigma ] } \, \alpha_j } \cdot \bigl ( \, \overline{ \omega_r } \, \bigr ) \mystrut^{ \, \textstyle \beta_r } \, d \omega_r \, .
\]
Given \( b , c \in \Znonneg \), we have \( \int_{ \T } \omega_r^b \cdot \overline{ \omega_r }^{ \, c } \, d \omega_r = 0 \) if \( b \neq c \), and \( 1 \) otherwise. Moreover, since \( \alpha_j = 0 \) whenever \( j \notin \supp ( \alpha ) \), we have \( \sum \limits_{ j \in I_r [ \sigma ] } \alpha_j \ = \sum \limits_{ j \in I_r [ \sigma ] \cap \supp ( \alpha ) } \alpha_j \)\,.

\medskip

The calculations above, together with \( \C \)-linearity of the integral, show that
\begin{equation} \label{P implies Q}
\QsigbeP (z) = \sum_{ \alpha \in \Ssigbe } c_{ \alpha } (P) \, z^{ \alpha } \, ,
\end{equation}
where \( \Ssigbe \) is the set of those \( \alpha \) satisfying \( \abs{ \alpha } = m \), \( w( \alpha ) = k \), and the additional conditions
\begin{equation} \label{sum alpha_j = beta_r}
\sum_{ \mathclap{ j \in I_r [ \sigma ] \cap \supp ( \alpha ) } } \alpha_j = \beta_r \, , \quad \textnormal{for each} \ r \, .
\end{equation}
This shows that \( \QsigbeP \in \CP_m^{ (n) } \). Notice that \( \Ssigbe \) does not depend on \( P \), but only on \( \sigma \) and \( \beta \) (recall that \( m \) and \( k \) are fixed in our reasoning).

Let \( I[ \sigma ] \coloneq I_1 [ \sigma ] \times \dotsb \times I_k [ \sigma ] \). For \( s = ( s_1 , \dotsc , s_k ) \in I[ \sigma ] \), let \( \alpha [ s , \beta ] \in \Znonneg^n \) be given by
\begin{equation} \label{alphasbeta}
\alpha [ s , \beta ]_j =
\begin{cases}
\beta_r \, , & \ \textnormal{for} \ j = s_r \, ; \\[3mm]
0 \, , & \ \textnormal{otherwise} \, .
\end{cases}
\end{equation}
We claim that \( S_{ \sigma , \beta } \) is precisely the set of tuples \( \alpha[ s , \beta ] \) with \( s \in I[ \sigma ] \), and so~\eqref{P implies Q} and~\eqref{alphasbeta} imply
\[
\QsigbeP (z) = \sum_{ s \in I[ \sigma ] } c_{ \alpha [ s , \beta ] } (P) \ z_{ s_1 }^{ \beta_1 } \dotsm z_{ s_k }^{ \beta_k } \, .
\]
To prove this fact, notice that \( \supp ( \alpha [ s , \beta ] ) = \{ s_1 , \dotsc , s_k \} \), hence \( w( \alpha ) = k \). Moreover, \( \abs{ \, \alpha [ s , \beta ] \, } = \alpha [ s , \beta ]_{ s_1 } + \dotsb + \alpha [ s , \beta ]_{ s_k } = \beta_1 + \dotsb + \beta_k = m \), and \( I_r [ \sigma ] \cap \supp ( \alpha [ s , \beta ] ) = \{ s_r \} \) for each \( r \), hence \( \alpha [ s , \beta ] \) satisfies~\eqref{sum alpha_j = beta_r}, by~\eqref{alphasbeta}. Conversely, let \( \alpha \in \Ssigbe \)\,. Since \( \beta_r > 0 \),~\eqref{sum alpha_j = beta_r} implies \( \alpha_j > 0 \) for some \( j \in I_r [ \sigma ] \cap \supp ( \alpha ) \). Thus, the \( k \)-element set \( \supp ( \alpha ) \) contains one element from each one of the \( k \) pairwise disjoint blocks \( I_r [ \sigma ] \), hence \( \supp ( \alpha ) = \{ s_1 , \dotsc , s_k \} \), with \( s_r \in I_r [ \sigma ] \) for each \( r \). If \( s = ( s_1 , \dotsc , s_k ) \), then \( s \in I[ \sigma ] \), and~\eqref{sum alpha_j = beta_r}, \eqref{alphasbeta} imply \( \alpha = \alpha [ s , \beta ] \).

Associate with \( \QsigbeP \) the \( k \)-linear form \( \AsigbeP \) on \( \C^n \) given by
\[
\AsigbeP \bigl ( x^{ (1) } , \dotsc , x^{ (k) } \bigr ) = \sum_{ s \in I[ \sigma ] } c_{ \alpha [ s , \beta ] } (P) \ x_{ s_1 }^{ (1) } \dotsb x_{ s_k }^{ (k) } \, .
\]
Given \( x^{ (1) } , \dotsc , x^{ (k) } \in \D^n \) we pick, for each \( r \) and each \( j \in I_r [ \sigma ] \)\,, a \( \beta_r \)-th root \( z_j \) of \( x_j^{ (r) } \). We have \( z_j \in \D \) because \( x_j^{ (r) } \in \D \), and \( \AsigbeP \bigl ( x^{ (1) } , \dotsc , x^{ (k) } \bigr ) = \QsigbeP (z) \). On the other hand, for \( z \in \D^n \) and each \( r \) the elements \( x^{ (r) } = ( z_1^{ \beta_r } , \dotsc , z_n^{ \beta_r } ) \) are in \( \D^n \) and satisfy \( \QsigbeP (z) = \AsigbeP \bigl ( x^{ (1) } , \dotsc , x^{ (k) } \bigr ) \). From this, we conclude that \( \norm{ \AsigbeP } = \norm{ \QsigbeP }_{ \infty } \)\,.

We claim that \( \norm{ \QsigbeP }_{ \infty } \leq \norm{P}_{ \infty } \)\,, hence \( \norm{ \AsigbeP } \leq \norm{P}_{ \infty } \)\,. This, together with the multilinear \noun{Bohnenblust--Hille} inequality~\eqref{mult BH} applied to \( \AsigbeP \)\,, gives
\begin{equation} \label{sum_s < = B ||P||}
\sum_{ s \in I[ \sigma ] } \abs[\big]{ c_{ \alpha [ s , \beta ] } (P) }^{ \BH_k } \leq B_k^{ \BH_k } \cdot \norm{P}_{ \infty }^{ \BH_k } \, .
\end{equation}
To prove our claim, take \( \omega_1 , \dotsc , \omega_k \in \T \) and \( z \in \D^n \). We have \( ( \omega_{ \sigma (1) } \, z_1 , \dotsc , \omega_{ \sigma (n) } \, z_n ) \in \D^n \), hence \( \abs{ P( \omega_{ \sigma (1) } \, z_1 , \dotsc , \omega_{ \sigma (n) } \, z_n ) } \leq \norm{P}_{ \infty } \)\,. Thus,~\eqref{defQsigbeP} implies
\begin{align*}
\abs{ \QsigbeP (z) } & \leq \textstyle \int_{ \T^k } \abs[\big]{ P( \omega_{ \sigma (1) } \, z_1 , \dotsc , \omega_{ \sigma (n) } \, z_n ) \, \overline{ \omega_1 }^{ \, \beta_1 } \dotsm \, \overline{ \omega_k }^{ \, \beta_k } } \, d \omega \\[3mm]
& \textstyle \leq \int_{ \T^k } \norm{P}_{ \infty } \, d \omega = \norm{P}_{ \infty } \, ,
\end{align*}
from which the inequality \( \norm{ \QsigbeP }_{ \infty } \leq \norm{P}_{ \infty } \) follows.

Let \( S \) be the sum of the left side of~\eqref{sum_s < = B ||P||} over all the pairs \( ( \sigma , \beta ) \). Since there are \( k^n \) maps \( \sigma \colon \{ 1 , \dotsc , n \} \to \{ 1 , \dotsc , k \} \) and \( \CC_{ m , k } \) has \( \binom{ m - 1 }{ k - 1 } \) elements, we have
\[
S = \sum_{ \sigma , \beta } \sum_{ s \in I[ \sigma ] }
\abs[\big]{ c_{ \alpha [ s , \beta ] } (P) }^{ \BH_k } \leq k^n \binom{ m - 1 }{ k - 1 } B_k^{ \BH_k } \cdot \norm{P}_{ \infty }^{ \BH_k } \, .
\]
Finally, for each \( \alpha \) with \( \abs{ \alpha } = m \) and \( w ( \alpha ) = k \), we claim that \( \alpha = \alpha [ s , \beta ] \) for some \( ( \sigma , \beta ) \) and some \( s \in I[ \sigma ] \) if, and only if,
\[
\sigma [ \alpha ] \coloneq \sigma \mathord{ \restriction }_{ \supp ( \alpha ) } \colon \supp ( \alpha ) \to \{ 1 , \dotsc , k \}
\]
is a bijection from \( \supp ( \alpha ) \) onto \( \{ 1 , \dotsc , k \} \) and \( \beta_r = \alpha_{ \sigma [ \alpha ]^{ -1 } (r) } \) for each \( r \). The first condition is satisfied by exactly \( k! \cdot k^{ n - k } \) maps \( \sigma \), and each such map determines uniquely the composition \( \beta \). Therefore the summand \( \abs{ c_{ \alpha } (P) }^{ \BH_k } \) appears exactly \( k! \cdot k^{ n - k } \) times in the iterated sum that defines \( S \), hence
\[
\sum_{ \alpha } \abs{ c_{ \alpha } (P) }^{ \BH_k } = \frac{S}{ k! \cdot k^{ n - k } } \leq \frac{ k^n }{ k! \cdot k^{ n - k } } \binom{ m - 1 }{ k - 1 } B_k^{ \BH_k } \cdot \norm{P}_{ \infty }^{ \BH_k } \, ,
\]
from which~\eqref{eq:support-level} follows.

Regarding the proof of the remaining assertion, if \( \alpha = \alpha [ s , \beta ] \), with \( s = ( s_1 , \dotsc , s_k ) \in I[ \sigma ] \), then \( \sigma ( s_r ) = r \) for each \( r \), and since \( \supp ( \alpha ) = \{ s_1 , \dotsc , s_k \} \) by~\eqref{alphasbeta}, we conclude that \( \sigma [ \alpha ] \) is a bijection from \( \supp ( \alpha ) \) onto \( \{ 1 , \dotsc , k \} \), and for each \( r \) we have \( \sigma [ \alpha ]^{ -1 } (r) = s_r \)\,, hence \( \alpha_{ \sigma [ \alpha ]^{ -1 } (r) } = \alpha_{ s_r } = \beta_r \)\,, again by~\eqref{alphasbeta}. Conversely, suppose that \( \sigma [ \alpha ] \) is a bijection from \( \supp ( \alpha ) \) onto \( \{ 1 , \dotsc , k \} \) and \( \beta_r = \alpha_{ \sigma [ \alpha ]^{ -1 } (r) } \) for each \( r \). If \( s_r = \sigma [ \alpha ]^{ -1 } (r) \), then \( \supp ( \alpha ) = \{ s_1 , \dotsc , s_k \} \), and \( s = ( s_1 , \dotsc , s_k ) \in I[ \sigma ] \) satisfies \( \alpha = \alpha [ s , \beta ] \): in fact, by~\eqref{alphasbeta} we have \( \alpha [ s , \beta ]_{ s_r } = \beta_r = \alpha_{ s_r } \)\,, and for \( j \neq s_1 , \dotsc , s_k \) we have \( \alpha [ s , \beta ]_j = 0 \), whereas \( \alpha_j = 0 \) because \( j \notin \supp ( \alpha ) \).
\end{proof}

\noindent The gain over an unrestricted support-sensitive estimate comes from the number \( \binom{ m - 1 }{ k - 1 } \) of positive compositions. In a nonhomogeneous problem the exponents in the \( k \) active coordinates may vary independently, whereas homogeneity forces their sum to be equal to \( m \).

\section{A polynomial estimate at the support exponent}

\noindent We now combine the exact-level estimate with the bounded projections from Lemma~\ref{lem:projection}.

\begin{theorem} \label{thm:support-exponent}

For each \( \MW \in \Z^+ \) there exists \( A_{ \MW } \geq 1 \) such that, for all \( m , n \) and every \( P \in \CP_{ m , \MW }^{ (n) } \)\,, one has
\begin{equation} \label{eq:support-exponent}
\Biggl ( \, \sum_{ \alpha } \, \abs{ c_{ \alpha } (P) }^{ \BH_{ \MW } } \Biggr )^{ \frac{1}{ \BH_{ \MW } } } \leq A_{ \MW } \cdot m^{ ( \MW^2 - 1 ) /( 2 \MW ) } \cdot \norm{P}_{ \infty } \, .
\end{equation}

\end{theorem}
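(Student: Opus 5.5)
We decompose \( P = P_1 + \dotsb + P_{ \MW } \) as in~\eqref{P = P1 + + PM} and estimate each support level separately in the \( \BH_{ \MW } \)-norm, then recombine with the triangle inequality in \( \ell_{ \BH_{ \MW } } \) (valid since \( \BH_{ \MW } \geq 1 \)). The coefficient families of the \( P_k \) have disjoint supports and together make up those of \( P \). Hence
\[
\Biggl ( \sum_{ \alpha } \abs{ c_{ \alpha } (P) }^{ \BH_{ \MW } } \Biggr )^{ \frac{1}{ \BH_{ \MW } } } \leq \sum_{ k = 1 }^{ \MW } \Biggl ( \sum_{ w( \alpha ) = k } \abs{ c_{ \alpha } (P) }^{ \BH_{ \MW } } \Biggr )^{ \frac{1}{ \BH_{ \MW } } } .
\]

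For each \( k \leq \MW \) we have \( \BH_k \leq \BH_{ \MW } \), so the \( \ell_{ \BH_{ \MW } } \)-norm of the coefficients of \( P_k \) is at most their \( \ell_{ \BH_k } \)-norm. We bound the latter by Proposition~\ref{prop:support-level} applied to \( P_k \). Lemma~\ref{lem:projection} then gives \( \norm{ P_k }_{ \infty } \leq L_{ \MW , k } \norm{P}_{ \infty } \), so the \( k \)-th term is at most
\[
\Bigl [ \tfrac{ k^k }{ k! } \tbinom{ m - 1 }{ k - 1 } \Bigr ]^{ \frac{ k + 1 }{ 2k } } B_k L_{ \MW , k } \norm{P}_{ \infty } .
\]
Using \( \binom{ m - 1 }{ k - 1 } \leq m^{ k - 1 } \), the power of \( m \) is \( m^{ ( k - 1 )( k + 1 ) / ( 2k ) } = m^{ ( k^2 - 1 ) / ( 2k ) } \). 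The function \( k \mapsto ( k^2 - 1 ) / ( 2k ) = ( k - 1/k ) / 2 \) is increasing, so for \( m \geq 1 \) every term is bounded by \( m^{ ( \MW^2 - 1 ) / ( 2 \MW ) } \) times a constant depending only on \( k \) and \( \MW \).

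Summing over \( k \) yields~\eqref{eq:support-exponent} with
\[
A_{ \MW } \coloneq \sum_{ k = 1 }^{ \MW } \Bigl ( \tfrac{ k^k }{ k! } \Bigr )^{ \frac{ k + 1 }{ 2k } } B_k L_{ \MW , k } ,
\]
which is at least \( 1 \) because the \( k = 1 \) term is \( B_1 L_{ \MW , 1 } \geq 1 \). Since the proposition and the lemma are already available, there is no real obstacle. The only delicate point is the monotonicity of the exponent \( ( k^2 - 1 ) / ( 2k ) \) together with the comparison of \( \ell_{ \BH_k } \) and \( \ell_{ \BH_{ \MW } } \) norms. These are what allow the lower levels, which come with a smaller exponent \( \BH_k \), to be absorbed into the top-level bound. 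The case \( k > m \) is trivial, since then \( P_k = 0 \).
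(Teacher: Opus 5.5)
Your proposal is correct and follows the paper's proof essentially step by step. You use the same ingredients: the exact support-level decomposition, the comparison of the $\ell_{\BH_{\MW}}$- and $\ell_{\BH_k}$-norms, Proposition~\ref{prop:support-level} applied to $P_k$, Lemma~\ref{lem:projection}, and the monotonicity of $(k^2-1)/(2k)$. The only differences are cosmetic and affect just the size of $A_{\MW}$: you recombine the levels with the triangle inequality in $\ell_{\BH_{\MW}}$ instead of adding $\BH_{\MW}$-th powers over the disjoint supports, and you use the cruder bound $\binom{m-1}{k-1}\le m^{k-1}$.
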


\begin{proof}

Write \( P = \sum_{ j = 1 }^{ \MW } P_j \) as in~\eqref{P = P1 + + PM}. For \( k \leq \MW \) we have \( \BH_k \leq \BH_{ \MW } \)\,, and so the monotonicity of finite-dimensional \( \ell_p \)-norms implies
\begin{equation} \label{sumqW<=sumqk}
\Biggl ( \, \sum_{ w( \alpha ) = k } \abs{ c_{ \alpha } (P) }^{ \BH_{ \MW } } \Biggr )^{ \frac{1}{ \BH_{ \MW } } } \leq \Biggl ( \, \sum_{ w( \alpha ) = k } \abs{ c_{ \alpha } (P) }^{ \BH_k } \Biggr )^{ \frac{1}{ \BH_k } } \, .
\end{equation}
From the definition of \( P_k \) we get
\begin{equation} \label{sumPwalpha=k}
\Biggl ( \, \sum_{ w( \alpha ) = k } \abs{ c_{ \alpha } (P) }^{ \BH_k } \Biggr )^{ \frac{1}{ \BH_k } } = \Biggl ( \, \sum_{ \alpha } \, \abs{ c_{ \alpha } ( P_k ) }^{ \BH_k } \Biggr )^{ \frac{1}{ \BH_k } } \, .
\end{equation}
Applying Proposition~\ref{prop:support-level} to \( P_k \)\,, together with the inequality \( \binom{ m - 1 }{ k - 1 } \leq \frac{ m^{ k - 1 } }{ ( k - 1 )! } \)\,, yields
\begin{equation} \label{sumPk<=Bk||Pk||}
\Biggl ( \, \sum_{ \alpha } \, \abs{ c_{ \alpha } ( P_k ) }^{ \BH_k } \Biggr )^{ \frac{1}{ \BH_k } } \leq \biggl [ \frac{ k^k }{ k! } \, \frac{ m^{ k - 1 } }{ ( k - 1 )! } \biggr ]^{ \frac{1}{ \BH_k } } B_k \cdot \norm{ P_k }_{ \infty } \, .
\end{equation}
Finally, we have \( \norm{ P_k }_{ \infty } \leq L_{ \MW , k } \cdot \norm{P}_{ \infty } \) by Lemma~\ref{lem:projection}. From this, together with~\eqref{sumqW<=sumqk}--\eqref{sumPk<=Bk||Pk||}, we obtain
\begin{equation} \label{algo < = D_{ MW },k}
\Biggl ( \, \sum_{ w( \alpha ) = k } \abs{ c_{ \alpha } (P) }^{ \BH_{ \MW } } \Biggr )^{ \frac{1}{ \BH_{ \MW } } } \leq D_{ \MW , k } \cdot m^{ ( k - 1 ) / \BH_k } \cdot \norm{P}_{ \infty } \, ,
\end{equation}
where
\[
D_{ \MW , k } \coloneq L_{ \MW , k } \cdot B_k \cdot \biggl [ \frac{ k^k }{ k! ( k - 1 )! } \biggr ]^{ \frac{1}{ \BH_k } } \, .
\]
We have \( \frac{ k - 1 }{ \BH_k } = \frac{ ( k - 1 ) ( k + 1 ) }{ 2k } = \frac{ k^2 - 1 }{ 2k } \leq \frac{ \MW^2 - 1 }{ 2 \MW} \)\,, and therefore we may replace \( m^{ ( k - 1 ) / \BH_k } \) with \( m^{ ( \MW^2 - 1 ) / ( 2 \MW ) } \) on the right side of~\eqref{algo < = D_{ MW },k}. Having this, we may take \( \BH_{ \MW} \)-th powers on both sides, which yields (recall that \( P \in \CP_{ m , \MW }^{ (n) } \))
\begin{align*}
\sum_{ \alpha } \, \abs{ c_{ \alpha } (P) }^{ \BH_{ \MW } } & = \sum_{ k = 1 }^{ \MW } \biggl ( \, \sum_{ w( \alpha ) = k } \abs{ c_{ \alpha } (P) }^{ \BH_{ \MW } } \biggr ) \\[3mm]
& \leq \biggl ( \, \sum_{ k = 1 }^{ \MW } D_{ \MW , k }^{ \BH_{ \MW } } \biggr ) \bigl [ m^{ ( \MW^2 - 1 ) / ( 2 \MW ) } \cdot \norm{P}_{ \infty } \bigr ]^{ \BH_{ \MW } } \, ,
\end{align*}
and therefore
\[
\Biggl ( \, \sum_{ \alpha } \, \abs{ c_{ \alpha } (P) }^{ \BH_{ \MW } } \Biggr )^{ \frac{1}{ \BH_{ \MW } } } \leq A_{ \MW } \cdot m^{ ( \MW^2 - 1 ) / ( 2 \MW ) } \cdot \norm{P}_{ \infty } \, ,
\]
where
\[
A_{ \MW } \coloneq \biggl ( \, \sum_{ k = 1 }^{ \MW } D_{ \MW , k }^{ \BH_{ \MW } } \biggr )^{ \frac{1}{ \BH_{ \MW } } } \, .
\]
Finally, we have \( A_{ \MW }^{ \BH_{ \MW } } \geq D_{ \MW , 1 }^{ \BH_{ \MW } } = ( L_{ \MW , 1 } \cdot B_1 )^{ \BH_{ \MW } } \). Since \( L_{ \MW , 1 } \geq 1 \) by Lemma~\ref{lem:projection} and \( B_k \geq 1 \) for all \( k \) (we actually have \( B_1 = 1 \)), we conclude that \( A_{ \MW } \geq L_{ \MW , 1 } \cdot B_1 \geq 1 \).
\end{proof}

\begin{remark}

For fixed \( \MW \), the degree dependence in~\eqref{eq:support-exponent} is
\[
m^{ ( \MW^2 - 1 ) / ( 2 \MW ) } = \frac{ m^{ \MW / 2 } }{ m^{ 1 / ( 2 \MW ) } } \, .
\]
The saving of \( \frac{1}{ 2 \MW } \) in the power of \( m \) is precisely the contribution of the homogeneous constraint \( \beta_1 + \dotsb + \beta_{ \MW } = m \).

\end{remark}

\section{Proof of asymptotic contractivity}

\noindent We are now ready to prove the main result.

\begin{proof}[Proof of Theorem~\ref{thm:main}]

We must show that
\[
\Biggl ( \, \sum_{ \alpha } \, \abs{ c_{ \alpha } (P) }^{ \frac{ 2m }{ m + 1 } } \Biggr )^{ \frac{ m + 1 }{ 2m } } \leq A_{ \MW }^{ \MW / m } \cdot m^{ ( \MW^2 - 1 ) / ( 2m ) } \cdot \norm{P}_{ \infty }
\]
for all \( m , n \) with \( m \geq \MW \) and every nonzero \( P \in \CP_{ m , \MW }^{ (n) } \)\,. Let \( x \) be the vector of nonzero coefficients of \( P \) (in any order). By \noun{Parseval}'s identity on \( \T^n \),
\begin{equation} \label{||x||_2 < = ||P||}
\norm{x}_2 = \norm{P}_{ L^2 ( \T^n ) } \leq \norm{P}_{ \infty } \, .
\end{equation}
Since \( 0 \leq \frac{ \MW }{m} \leq 1 \) and \( \frac{ m + 1 }{ 2m } = \frac{ \MW }{m} \cdot \frac{1}{ \BH_{ \MW } } + \bigl ( 1 - \frac{ \MW }{m} \bigr ) \cdot \frac{1}{2} \)\,, the log-convexity of \( \ell_p \)-norms yields
\begin{equation} \label{||x||<=W}
\norm{x}_{ \frac{ 2m }{ m + 1 } } \leq \norm{x}_{ \BH_{ \MW } }^{ \MW / m } \cdot \norm{x}_2^{ 1 - \MW / m } = \Biggl [ \, \Biggl ( \, \sum_{ \alpha } \, \abs{ c_{ \alpha } (P) }^{ \BH_{ \MW } } \Biggr )^{ \frac{1}{ \BH_{ \MW } } } \Biggr ]^{ \frac{ \MW }{m} } \cdot \norm{x}_2^{ 1 - \MW / m } \, .
\end{equation}
We may bound the factors on the right side of~\eqref{||x||<=W} via Theorem~\ref{thm:support-exponent} and~\eqref{||x||_2 < = ||P||}, respectively, obtaining
\begin{align*}
\Biggl ( \, \sum_{ \alpha } \, \abs{ c_{ \alpha } (P) }^{ \frac{ 2m }{ m + 1 } } \Biggr )^{ \frac{ m + 1 }{ 2m } } & = \norm{x}_{ \frac{ 2m }{ m + 1 } } \leq \bigl ( A_{ \MW } \cdot m^{ ( \MW^2 - 1 ) / ( 2 \MW ) } \cdot \norm{P}_{ \infty } \bigr )^{ \MW / m } \cdot \norm{P}_{ \infty }^{ 1 - \MW / m } \\[3mm]
& = A_{ \MW }^{ \MW / m } \cdot m^{ ( \MW^2 - 1 ) / ( 2m ) } \cdot \norm{P}_{ \infty } \, . \qedhere
\end{align*}

\end{proof}

\begin{remark}

When \( \MW = 1 \), every \( P \in \CP_{ m , \MW }^{ (n) } \) has the form
\[
P(z) = \sum_{j = 1 }^n a_j z_j^m \, .
\]
Let \( a = ( a_1 , \dotsc , a_n ) \). For each \( j \), let \( \abs{ a_j } = a_j \omega_j \)\,, with \( \omega_j \in \T \), and let \( z_j \in \T \) be an \( m \)-root of \( \omega_j \)\,. Then for all \( r \in ( 0 , 1 ) \) we have \( rz \in \D^n \). These facts, together with the monotonicity of finite-dimensional \( \ell_p \)-norms, imply (recall that \( \BH_m \geq 1 \))
\[
r^m \norm{a}_{ \BH_m } \leq r^m \norm{a}_1 = \sum_{ j = 1 }^n a_j ( r z_j )^m = \abs{ P ( rz ) } \leq \norm{P}_{ \infty } \, .
\]
Taking \( r \to 1^- \) yields
\[
\biggl ( \, \sum_j \, \abs{ a_j }^{ \frac{ 2m }{ m + 1 } } \biggr)^{ \frac{ m + 1 }{ 2m } } \leq \norm{P}_{ \infty } \, ,
\]
hence \( K_{ m , 1 } \leq 1 \), by~\eqref{def K_m,MW}. Thus, \( K_{ m , 1 } = 1 \) for all \( m \).

\end{remark}

\section{Comparison with a recent support-sensitive inequality}

\noindent \noun{A.~Defant}, \noun{D.~Galicer}, \noun{M.~Mansilla}, \noun{M.~Masty{\l}o} and \noun{S.~Muro} recently proved a general support-sensitive \noun{Bohnenblust--Hille} inequality on the torus~\cite{DGMMM}*{Theorem~2.12}, which constitutes a central tool in their study of low-level spaces on \noun{Hamming} schemes, and deals with a broader nonhomogeneous setting. It asserts that, for every integer \( r \geq 2 \), there exists \( c_r = O \bigl ( \sqrt{r} \, \bigr ) \) such that
\begin{equation} \label{general BH torus}
\Biggl ( \, \sum_{ \substack{ \alpha \in \{ 0 , \dotsc , r - 1 \}^n \\ w( \alpha ) \leq d } } \abs{ a_{ \alpha } }^{ \BH_d } \Biggr )^{ \frac{1}{ \BH_d } } \leq c_r^d \cdot \norm{Q}_\infty
\end{equation}
for every polynomial
\[
Q(z) = \sum_{ \substack{ \alpha \in \{ 0 , \dotsc , r - 1 \}^n \\ w( \alpha ) \leq d } } a_{ \alpha } z^{ \alpha } \, .
\]
For an admissible nonzero \( m \)-homogeneous polynomial in our problem, every coordinate exponent is at most \( m \). Moreover, by the maximum modulus principle, its supremum norm on \( \T^n \) agrees with its supremum norm on \( \D^n \). Thus~\eqref{general BH torus} can be applied with \( r = m + 1 \) and \( d = \MW \), which gives
\begin{equation} \label{||x||_pM < =}
\norm{x}_{ \BH_{ \MW } } \leq c_{ m + 1 }^{ \MW } \cdot \norm{P}_{ \infty } \, ,
\end{equation}
where \( x \) is the vector of nonzero coefficients of the polynomial. Interpolating~\eqref{||x||_pM < =} with \noun{Parseval}'s identity as in the proof of Theorem~\ref{thm:main} (see~\eqref{||x||_2 < = ||P||}, \eqref{||x||<=W}), we obtain
\[
K_{ m , \MW } \leq c_{ m + 1 }^{ \MW^2 / m } \, .
\]
Since \( c_r \leq C \sqrt{r} \) for a universal constant \( C \),
\[
K_{ m , \MW } \leq C^{ \MW^2 / m } \cdot ( m + 1 )^{ \MW^2 / ( 2m ) } \, ,
\]
and this also implies \( K_{ m , \MW } \to 1 \) for fixed \( \MW \).

The two approaches have different scopes. The estimate~\eqref{general BH torus} applies without the homogeneity constraint and is controlled by the maximum number of variables occurring in some monomial with nonzero coefficient, rather than by the total degree. Consequently, it yields an estimate of order \( ( m + 1 )^{ \MW^2 / 2m } \). On the other hand, our argument is restricted to the original homogeneous setting, but it uses the relation among the active exponents and yields the smaller exponent \( ( \MW^2 - 1 ) / ( 2m ) \).

The comparison just made concerns the polynomial dependence on \( m \) for fixed \( \MW \); the multiplicative constants have different dependence on \( \MW \) and arise from considerations of different nature.

\end{document}